\documentclass[english, 11pt]{amsart} %Xtop

\usepackage{cite}
\usepackage{amsmath}
\usepackage{amsfonts}
\usepackage{amssymb}
\usepackage{ae}
\usepackage{mathrsfs,array,graphicx}
\usepackage[all,ps]{xy}
\usepackage{hyperref}

\usepackage{note}
\usepackage{a4wide}
\usepackage{std_macros}
\linespread{1.2}

\renewcommand{\G}{\mathbb{G}}

\title{Existence of cuspidal representations of $p$-adic reductive groups}

\author{Arno Kret}
\begin{document}
\begin{abstract}
We prove that any reductive group $G$ over a non-Archimedean local field has a cuspidal complex representation.
\end{abstract}

\maketitle
%\tableofcontents

\section{Introduction}

%%
%%
%%  Er is nog een trucje om de berekening op het eind bij de orthogonale groep korter 
%%  te maken.
%%
%%

We prove the following Theorem:

\begin{theorem}\label{lemmaB}\label{existcusp}\label{exiscusp}
Let $G$ be a reductive group over $F$. Then $G(F)$ has a cuspidal complex representation. 
\end{theorem}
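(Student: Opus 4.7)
The plan is to construct a depth-zero supercuspidal representation of $G(F)$ by compact induction from a parahoric subgroup, reducing the question to the existence of a cuspidal representation of an associated finite reductive group.

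As a preliminary reduction, if $G$ is anisotropic modulo its center, then $G(F)/Z_G(F)$ is compact and $G$ admits no proper parabolic $F$-subgroup; any irreducible smooth representation (for example the trivial one) is automatically cuspidal. I may therefore assume $G$ has a non-trivial $F$-split torus modulo its center. I would pick a vertex $x$ in the reduced Bruhat--Tits building of $G$ over $F$, chosen so that the associated reductive quotient has the same absolute rank as $G$ (such vertices exist by Bruhat--Tits theory). Let $K^\circ \subset G(F)$ be the corresponding parahoric subgroup, $K^+$ its pro-unipotent radical, and $\mathsf{G}_x := K^\circ/K^+ = \underline{\mathsf{G}}_x(k)$ the $k$-points of the associated connected reductive quotient over the residue field $k$. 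A cuspidal irreducible representation $\bar{\sigma}$ of $\mathsf{G}_x$ exists by classical finite group theory: by Deligne--Lusztig theory one may take $\pm R^\theta_{\mathsf{T}}$ for an anisotropic maximal $k$-torus $\mathsf{T} \subset \underline{\mathsf{G}}_x$ and a character $\theta$ of $\mathsf{T}(k)$ in sufficiently general position.

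I would then inflate $\bar{\sigma}$ to a representation of $K^\circ$, extend it to the stabilizer $K = G(F)_x$ (which contains $K^\circ$ with finite index and is compact modulo the split part of $Z_G$), and further extend the central character to $K \cdot Z_G(F)$. Writing $\tilde{\sigma}$ for the resulting representation, the candidate cuspidal representation of $G(F)$ is
\[
\pi \,:=\, \textup{c-ind}_{K \cdot Z_G(F)}^{G(F)}\tilde{\sigma},
\]
or an irreducible subquotient thereof.

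The main obstacle is to verify that $\pi$ (or an irreducible constituent) is supercuspidal. Supercuspidality amounts to the vanishing $\pi_N = 0$ for the unipotent radical $N$ of every proper $F$-parabolic $P = MN \subsetneq G$; by a Mackey-type decomposition of the Jacquet functor applied to a compact induction, this reduces to the vanishing of $\bar{\sigma}_{\bar{N}}$ for the unipotent radicals $\bar{N}$ of proper $k$-parabolics of $\underline{\mathsf{G}}_x$ obtained from $N$, which holds precisely because $\bar{\sigma}$ was chosen cuspidal. The same-rank condition on $\underline{\mathsf{G}}_x$ ensures that this Mackey reduction detects every proper $F$-parabolic of $G$. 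Irreducibility of $\pi$, when needed, follows from a computation of the Hecke algebra $\mathcal{H}(G(F),\tilde{\sigma})$ in the style of Moy--Prasad and Morris; but for mere existence, one only needs to extract an irreducible supercuspidal subquotient of $\pi$.
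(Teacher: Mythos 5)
Your reduction to the residue field is essentially the paper's own first step: inflate a cuspidal representation of the reductive quotient of a maximal parahoric and compactly induce, citing Morris/Moy--Prasad-type results for the fact that the irreducible subquotients are cuspidal. Up to the level of rigor of the paper itself (which also quotes Morris for this), that part is fine, and your preliminary remark about the anisotropic-mod-center case is harmless.

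The genuine gap is the sentence in which you dispose of the finite-group problem: ``a cuspidal irreducible representation $\bar{\sigma}$ of $\mathsf{G}_x$ exists by classical finite group theory: by Deligne--Lusztig theory one may take $\pm R^\theta_{\mathsf{T}}$ for an anisotropic maximal $k$-torus $\mathsf{T}$ and a character $\theta$ of $\mathsf{T}(k)$ in sufficiently general position.'' The existence of such a pair $(\mathsf{T},\theta)$ is exactly the non-trivial content of the statement, and it is not available off the shelf. The standard counting argument (e.g.\ \cite[lemma 8.4.2]{carterfinitegroupsoflietypebook}) produces a character in general position only when $q$ is large compared to the order of the rational Weyl group of $\mathsf{T}$; for small residue fields such as $q=2,3$ and groups with large Weyl groups (precisely the situation that can occur for the parahoric quotient, over which you have no control), the orbit of a character under the rational Weyl group can a priori always have a non-trivial stabilizer, since $|\mathsf{T}(k)|$ can be smaller than $|W_{\mathsf{T}}(k)|$. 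One also cannot dodge the issue by using cuspidal unipotent characters, because several classical groups have none. The bulk of the paper consists of verifying, after reducing to simple adjoint groups and treating the exceptional types via Carter's tables, that for each classical type ($A_n$, $B_n$, $C_n$, $D_n$, ${}^2A_n$, ${}^2D_n$) there is an explicit elliptic maximal torus (built from a Coxeter or near-Coxeter element) and an explicit element of $X^*(T_0)/(w\Phi-1)X^*(T_0)$ whose $\langle w\rangle$-orbit is free, with the integrality estimates checked for every $q$, including the borderline cases $q=2$. Without an argument of this kind (or a reference that actually proves general-position characters exist over arbitrary finite fields), your proof establishes only the reduction, not the theorem.
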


This theorem is ``folklore'', but we could not find a proof for it in the literature. After some reduction steps the proof  consists of finding certain characters in general position of elliptic maximal tori of $G$. In case the cardinal of the residue field of $F$ is ``large with respect to $G$'', then there are quick arguments to show that characters in general position exist; see for example \cite[lemma 8.4.2]{carterfinitegroupsoflietypebook}. It are the small groups over small fields and big Weyl groups that might cause problems, and in this article we show that such problems do not occur.

\bigskip

\textbf{Acknowledgements:} I wish to thank my thesis adviser Laurent Clozel for helping me putting together this argument and correcting my mistakes. I thank Guy Henniart for explaining me how the problem can be reduced to finding characters in general position of certain maximal tori in finite groups of Lie type, and I thank Francois Digne for explaining me how these characters in general position can be found. 

\bigskip

\textbf{Notations:} Throughout, $F$ is a non-Archimedean local field with residue field $k$ which is of cardinality $q$. Let $p$ be the characteristic of $k$, and fix an algebraic closure $\li k$ of $k$.

\section{Reduction to a problem of classical finite groups of Lie type}

Let $P \subset G(F)$ be a maximal proper parahoric  subgroup with associated reductive quotient $M$ over $k$. We claim that $M(k)$ has an irreducible cuspidal representation $\sigma$. When $\sigma$ is proved to exist, then we may construct a cuspidal representation of $G$ as follows, see \cite{morristamelyramified}, \cite{morrispcuspidal} and \cite{morrisinventiones}. Inflate $\sigma$ to obtain a $P$-representation. We may compactly induce the $P$-representation $\sigma$ to a representation of $G(F)$. This $G(F)$-representation need not be irreducible, but its irreducible subquotients are all cuspidal. Therefore Theorem \ref{lemmaB} reduces to the next proposition. 

\begin{proposition}\label{cuspidalfinite}
Let $G$ be a reductive group over the finite field $k$. The group $G(k)$ has a cuspidal complex representation. 
\end{proposition}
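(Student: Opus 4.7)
The plan is to invoke Deligne--Lusztig theory. For a connected reductive group $G$ over $k$ with a rational elliptic maximal torus $T$ and a character $\theta$ of $T(k)$ in general position---meaning that its stabilizer in the rational Weyl group $W := N_G(T)(k)/T(k)$ is trivial---the virtual character $\pm R_T^\theta$ is an honest irreducible representation of $G(k)$, and it is cuspidal because $T$ is not contained in any proper rational Levi. The proposition therefore reduces to exhibiting, for every reductive $G$ over $k$, an elliptic rational maximal torus $T$ together with a regular character of $T(k)$.

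I would begin with standard reductions. If $\sigma$ is cuspidal for $G^\circ(k)$, then every irreducible constituent of $\mathrm{Ind}_{G^\circ(k)}^{G(k)}\sigma$ is cuspidal, so one may assume $G$ connected. Writing $G$ as an almost direct product $Z^\circ \cdot G_{\mathrm{der}}$ of its connected centre with its derived group, and noting that $Z^\circ$ contributes no proper parabolic, one reduces to $G$ semisimple; passing to a simply connected cover, decomposing as a direct product of $k$-simple factors, and observing that $G(k)=H(k')$ when $G=\mathrm{Res}_{k'/k}H$, one is left with the case that $G$ is absolutely quasi-simple and simply connected.

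At this point $G$ is of one of the classical or exceptional Dynkin types, and an elliptic rational maximal torus $T$ always exists (for instance a twisted Coxeter torus), with $T(k)$ admitting an explicit description as a product of norm-one subgroups of finite extensions of $k$. The existence of a character of $T(k)$ in general position is then equivalent to the counting inequality
\[
\sum_{1 \neq w \in W} \bigl|\, \widehat{T(k)}^{\,w} \,\bigr| \;<\; |T(k)|,
\]
which is immediate as soon as $q$ is large with respect to $|W|$, as in \cite[Lemma~8.4.2]{carterfinitegroupsoflietypebook}.

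The main obstacle---and evidently the content of the paper---is to verify this inequality in the remaining small cases: $q \in \{2,3\}$ combined with classical groups of large rank, plus the exceptional types over small fields. The finitely many exceptional cases can be settled by direct inspection. For the classical families $A_n, B_n, C_n, D_n$ (twisted or not) with $q$ small, I would choose $T$ so that, up to a bounded factor, $T(k)$ is the norm-one subgroup of $\mathbb{F}_{q^m}^\times$ for $m$ comparable to $n$, exploiting the fact that $W$ then acts on the characters of $T(k)$ through the cyclic group generated by a Coxeter element; this makes the fixed-point contributions sparse enough that the counting inequality reduces to a handful of arithmetic inequalities one can check by hand, even when $q=2$.
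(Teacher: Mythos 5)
Your core strategy (Deligne--Lusztig: elliptic maximal torus plus character in general position gives an irreducible cuspidal) is exactly the paper's, but your chain of reductions has a genuine gap: you reduce to the \emph{simply connected} cover, and that is the wrong direction. If $\widetilde G\to G$ is the simply connected cover with kernel $\mu$, the map $\widetilde G(k)\to G(k)$ has kernel $\mu(k)$ and image a proper normal subgroup in general, so a cuspidal representation of $\widetilde G(k)$ does not produce a representation of $G(k)$: it only factors through the image if it is trivial on $\mu(k)$, i.e.\ if the regular character you construct is trivial on $\mu(k)\subset \widetilde T(k)$, and arranging that is essentially the original problem for the smaller isogeny type. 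The paper reduces in the opposite direction, to the \emph{adjoint} quotient: restriction along $G(k)\to G_{\mathrm{ad}}(k)$ of a cuspidal irreducible decomposes into cuspidal irreducibles of $G(k)$, because parabolics of $G_{\mathrm{ad}}$ pull back to parabolics of $G$ with the same unipotent radical. (Your earlier steps are repairable --- e.g.\ reductive to derived group works by inducing from the normal subgroup $G_{\mathrm{der}}(k)$, which contains every $N(k)$, and applying Mackey --- but the simply connected step cannot be fixed this way, since there you are not inducing from a subgroup at all.)

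Beyond that, the part you defer to ``a handful of arithmetic inequalities one can check by hand'' is precisely the content of the paper, and your sketch glosses over real obstructions there. The counting bound $\sum_{w\neq 1}|\widehat{T(k)}^{\,w}|<|T(k)|$ is sufficient but not equivalent to the existence of a regular character, and for $q=2,3$ it is delicate; the paper instead exhibits explicit lattice elements ($e_1$, $2e_1$, $e_1-e_{n+1}$, $2e_m$, $2e_n$, \dots) and proves freeness of their orbits by integrality arguments in $X^*(T_0)/(w\Phi-1)X^*(T_0)$, uniformly in $q$. Also, the rational Weyl group of the chosen torus is \emph{not} always the cyclic group generated by a Coxeter element as you assert: for split type $D_n$, for ${}^2D_{n-1}$, and for the even unitary groups the paper must modify the torus and/or compute a rational Weyl group with extra elements (an extension involving an additional sign $\nu$), and the general-position check must be done against that larger group. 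Finally, for the exceptional types the paper does not use tori at all but quotes Carter's tables of cuspidal \emph{unipotent} characters, which disposes of those cases for every $q$ at once; your ``direct inspection over small fields'' would in principle work but is not carried out. As it stands, then, your proposal fails at the isogeny-reduction step and leaves the decisive small-field verifications unproved.
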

\begin{proof}
We will first reduce to $G$ simple and adjoint.  Consider the morphism $G(k) \to G_\ad(k)$. If $\pi$ is a irreducible representation of $G_\textup{ad}(k)$, then, when restricted to a representation of $G(k)$ it will decompose as a finite direct sum $\pi = \bigoplus_i \pi_i$ of irreducible representations. Recall that $\pi$ is cuspidal if and only if $\uH^0(N(k), V) = 0$ for all rational parabolic subgroups $P \subset G$ with Levi decomposition $P = MN$. The map $G \to G_\ad$ is an isomorphism on its image when restricted to $N$. For any parabolic subgroup $P = MN \subset G_\ad$ the inverse image of $P$ in $G$ is a parabolic subgroup with the same unipotent part. Thus, if $\pi$ is cuspidal as $G_\ad(k)$-representation, then the $\pi_i$ are cuspidal representations of $G(k)$. Therefore, we may assume that $G$ is adjoint. But then $G$ is a product of $k$-simple adjoint groups. If the theorem is true for all the factors, then the theorem is true for $G$. So we may assume that $G = \textup{Res}_{k'/k} G'$ where $G'$ is (absolutely) simple and defined over some finite extension $k'$ of $k$. We have $G(k) = G'(k')$, and under this equality cuspidal representations correspond to cuspidal representations. Therefore, we may assume that $G$ is simple and adjoint. 

The simple reductive groups $G$ over $k$ are classified by their root system. We will distinguish cases between the possible root systems. Let us first assume that the root system of $G$ is exceptional, \ie of the form
${}^3 D_4$, $E_6$, ${}^2 E_6$, $E_7$, $E_8$, $F_4$, ${}^2F_4$, $G_2$ or ${}^2 G_2$.
In Carter's book \cite[\S 13.9]{carterfinitegroupsoflietypebook} one finds for each exceptional group the complete list of its unipotent irreducible complex trace characters. He also mentions for each group how many of these characters are cuspidal. As it turns out, in each of the exceptional cases, this number is $>0$ and so in particular all the exceptional groups have a cuspidal representation. Some of the classical groups do not have cuspidal \emph{unipotent} characters. So unfortunately for those groups we cannot find a cuspidal representation in Carter's list.

It remains to verify Proposition \ref{cuspidalfinite} for the simple adjoint groups $G/k$ which are \emph{classical}. Thus if $G$ is split, then it is of type $A_n, B_n, C_n$ or $D_n$, and if it is non-split, then it is of type ${}^2 A_n$ or ${}^2 D_n$. To do this we will use Deligne-Lusztig theory in Section \ref{findingchars} to reduce the problem to finding characters in general position. In section \ref{splitgroups} we will then verify that all split groups have such a character. In sections \ref{unitarygroups} and \ref{orthogonalgroups} we will then find characters in general position for the remaining non-split root systems. The proof Proposition \ref{cuspidalfinite} will then be complete. 
\end{proof}

\section{Characters in general position}\label{findingchars}

Let $G/k$ be a reductive group with connected center. We will apply results of Deligne-Lusztig \cite{delignelusztig}. Pick $\ell$ a prime number different from $p$. Suppose that we are given the following data: $T \subset G$ a torus and $\theta \colon T(k) \to \lql^\times$ a rational character. Then, to this data Deligne and Lusztig associate a virtual character $R^\theta_T$ of $G(k)$ with $\lql$-coefficients \cite[p. 114]{delignelusztig}. 

%% \begin{definition} We recall very briefly the definition of this character $R^\theta_T$. %For more details see \cite[p. 114]{delignelusztig}. 
%% Choose a Borel $B$ subgroup of $G_{ik}$ containing $T_{ik}$, and let $U$ be the unipotent radical of $B$. 
%% Let $\til X_{T \subset B}$ be the $\lik$-variety such that for every $k$-algebra $A$ we have
%% \begin{equation}\label{magicvariety} \til X_{T \subset B}(A) = \lbr g \in G(A) \,\, | \,\, g^{-1}\Frob_q(g) \in \Frob (U(A_{\lik})) \rbr / U(A_{\lik}) \cap \Frob (U(A_{\lik})). 
%% \end{equation}
%% The group $G(k)$ acts on this variety by left multiplication, so by functoriality the \'etale cohomology $\uH^*_c(\til X_{T \subset B, \et}, \lql)$ carries an $G(k)$-representation. Furthermore, the group $T(k)$ normalizes $U(A_{\lik}) \cap \Frob (U(A_{\lik}))$, so    $T(k)$ acts on the variety in Equation \ref{magicvariety} by right multiplication, and thus also on the \'etale cohomology of this variety. The $G(k)$-action commutes with the $T(k)$-action. 
%% Let $\uH^i_c(\til X_{T \subset B, \et}, \lql)_\theta$ be the subspace of elements $x \in \uH^i_c(\til X_{T \subset B, \et}, \lql)$ on which $T(k)$ acts via $\theta$. By definition $R^\theta_T$ is the trace character of the virtual $G(k)$-representation 
%% $\sum_{i=0}^\infty (-1)^i \uH^i_c(\til X_{T \subset B, \et}, \lql)_\theta$. 
%% \end{definition}
%% 
%% The character $R^\theta_T$ is independent of $B$. 
Let $\sigma(G)$ be the $k$-rank of $G$ and let $\sigma(T)$ be the $k$-rank of $T$. Proposition \cite[Prop. 7.4]{delignelusztig} states that the character $(-1)^{\sigma(G) - \sigma(T)} R^\theta_T$ comes from an actual irreducible $G(k)$-representation $\pi_T^\theta$ if the character $\theta$ is in \emph{general position}, ie  if the rational Weyl group of $T$ acts freely on it.  
 Theorem \cite[thm 8.3]{delignelusztig} states that if, additionally, $T$ is elliptic, %\footnote{\ie is not contained in any proper rational parabolic subgroup of $G$, 
then $\pi_T^\theta$ is cuspidal. Assume for the moment that we have such a pair $(T, \theta)$. Pick an isomorphism $\iota \colon \lql \isomto \C$; then the $G(k)$-representation $\pi_T^\theta \otimes_\iota \C$ is complex cuspidal and irreducible. Therefore, the proof of  Proposition \ref{cuspidalfinite} is reduced to Proposition \ref{computation}, Proposition \ref{computationunitarygroup} and Proposition \ref{computationorthogonalgroup}. 

\section{The split classical groups}\label{splitgroups}

Before continuing with the proof, we recall some generalities. Let $G/k$ be a reductive group. 
Let $(T_0, B_0)$ be a pair consisting of a maximal torus and a Borel subgroup which contains $T_0$, both defined over $k$. Let $W_0/k$ be the Weyl group of $T_0 \subset G$. 
The Frobenius $\Frob_q = (x \mapsto x^q) \in \Gal(\lik/k)$ acts on the root datum % $(X^*(T_{0,\lik}), X_*(T_{0,\lik}), \Phi, \Phi^\vee)$ 
of $G$ 
by a diagram automorphism. By abuse of notation this diagram automorphism is also denoted $\Frob_q$. 

%From the quintet $(X^*(T_{0,\lik}), X_*(T_{0,\lik}), \Phi, \Phi^\vee, \rho)$ one may reconstruct $(G, T_0)$ as simple reductive algebraic group with maximal torus defined over $k$. Recall that the possible quintets are classified. Also recall that our $G$ is simple (and adjoint), so $(X^*(T_{0,\lik}), X_*(T_{0,\lik}), \Phi, \Phi^\vee)$ must be one of the $A_\ell, B_\ell$, \ldots, etc, see \cite[chap 6, \S 4 -- \S 13]{bourbakilie}. Maybe the reader is less familiar with the possibilities for quintets with non-trivial $\rho$. Here the possibilities are
%\begin{equation}\label{diagramautoms}
%{}^2 A_\ell\ (\ell \geq 2), \quad {}^2 D_\ell\  (\ell \geq 3), \quad {}^3 D_4, \quad {}^2 E_6,
%\end{equation}
%where the left exponent denotes the order of the diagram automorphism induced by Frobenius (in \cite[chap 6]{bourbakilie} these automorphisms are defined explicitely).

We can carry out the following construction. Let $\chi \colon T_{0,\lik} \to \G_{m, \lik}$ be a character. Restrict to $T_0(k)$ to get a morphism
$T_0(k) \hookrightarrow T_0(\lik) \to \lik^\times$.
From this construction we obtain a map 
$X^*(T_0) \to \Hom(T_0(k), \lik^\times)$, and this map fits in the exact sequence
\begin{equation}\label{ES}
0 \to X^*(T_0) \overset {\Phi - 1} \to X^*(T_0) \to  \Hom(T_0(k), \lik^\times) \to 0,
\end{equation}
of $\Z[W_0(k)]$-modules (see \cite[\S 5]{delignelusztig}). Here $\Phi$ is the relative $q$-Frobenius of $T_{0, \lik}$ over $\lik$, \ie  given by $f \otimes \lambda \mapsto f^q \otimes \lambda$ 
on the global sections $\cO_{T_0}(T_0) \otimes_{k} \lik$ of $T_{0, \lik}$. Recall that we write $\Frob_q$ for the Frobenius $f \otimes \lambda \mapsto f \otimes \lambda^q$ on $\cO_{T_0}(T_0) \otimes_{k} \lik$.

\begin{definition}
Two elements $w, w'$ in $W_0(\lik)$ are \emph{Frobenius conjugate}, or \emph{$\Frob_q$-conjugate}, if there exists an $x \in W_0(\lik)$ such that $w' = x w \Frob_q(x)^{-1}$. 
\end{definition}

The $G(k)$-conjugacy classes of rational maximal tori in $G_{\lik}$ are parametrized by the Frobenius conjugacy classes of $W_0(\lik)$ in the following manner.
Let $N_0$ be the normalizer of $T_0$ in $G$. We have a surjection from $G(\lik)$ to the set of maximal tori in $G_{\lik}$ by sending $g \in G(\lik)$ to the torus ${}^g T_0 : = g T_0 g^{-1}$. The torus ${}^g T_0 \subset G_{\lik}$ is rational (\ie $\Gal(\lik/k)$-stable) if and only if $g^{-1} \Frob_q(g) \in N_0(\lik)$. 

Assume that we have two elements $g, g' \in G(\lik)$ such that the tori ${}^g T_0$, ${}^{g'} T_0$ in $G_{\lik}$ are rational. Then, $g^{-1} \Frob_q(g)$ and $g^{\prime -1} \Frob_q(g')$ lie in $N_0(\lik)$ so we can map them to elements of the Weyl group $W_0(\lik)$ via the canonical surjection $\pi \colon N_0(\lik) \to W_0(\lik)$. The torus ${}^{g} T_0 \subset G_{\lik}$ is equal to the torus ${}^{g'} T_0 \subset G_{\lik}$ if and only if 
$$
\pi(g^{-1} \Frob_q(g)) \equiv \pi(g^{\prime -1} \Frob_q(g')) \in W_0(\lik)/_{\textup{Frobenius conjugacy}}, 
$$
(for the proof of this fact, see \cite[III.3.23]{dignemichaelbook}). This completes the description how Frobenius conjugacy classes in $W_0(\lik)$ parametrize $G(k)$-conjugacy classes of maximal tori in $G_{\lik}$. 

\begin{notation}
We will write $T_0(w)$ for the torus ${}^g T_0$.  
\end{notation}

\begin{proposition}\label{computation}
Let $G/k$ be a classical simple adjoint group. Then $G$ has an anisotropic maximal torus $T \subset G$  together with a character $\theta \colon T(k) \to \C^\times$ in general position. 
\end{proposition}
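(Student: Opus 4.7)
The plan is to treat each split classical type ($A_{n-1}, B_n, C_n, D_n$) by taking $w \in W_0(\li k)$ to be a Coxeter element $c$ and producing a character $\theta \colon T(k) \to \C^\times$ in general position on $T := T_0(c)$. A Coxeter element has no eigenvalue $1$ on $X^*(T_0) \otimes \Q$, so $T$ is automatically anisotropic; moreover $c$ is regular in Springer's sense, so $Z_{W_0}(c) = \langle c \rangle$ is cyclic of order equal to the Coxeter number $h$. This uniform choice of $w$ lets me avoid type-dependent fiddling at the outset.

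Once $c$ is fixed, the character group and its Weyl-group action become explicit. Since $G$ is split, $\Phi$ acts on $X^*(T_0)$ as multiplication by $q$, and the $w$-twisted analogue of the exact sequence \eqref{ES} identifies
\[
\Hom(T(k), \C^\times) \;\cong\; X^*(T_0) / (qc - 1)X^*(T_0).
\]
Decomposing the action of $c$ on $X^*(T_0) \otimes \Q$ into cyclic blocks of lengths $d_1, \ldots, d_r$ turns this quotient into a product of cyclic groups whose orders divide $\prod_i (q^{d_i} \pm 1)$, the signs being read off from the signed-cycle type of $c$ in the classical model of $W_0$. The rational Weyl group $N_G(T)(k)/T(k)$ equals $Z_{W_0}(c) = \langle c \rangle$, so general position for $\theta$ is equivalent to $\theta$ having trivial stabilizer under $\langle c \rangle$ acting by cyclic permutation on the blocks.

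The search for such a $\theta = (m_i)$ then reduces to elementary number theory: $\theta$ has trivial stabilizer iff, for each proper divisor $j$ of $h$, at least one of the congruences $(q^{j} - 1)\,m_i \equiv 0 \pmod{q^{d_i} \pm 1}$ fails. For generic $(q, n)$ it suffices to take $m_i$ of maximal order in the largest cyclic factor, because $q^{d} \pm 1$ does not divide $q^j - 1$ for any $j < d$; the adjoint quotient only enlarges the index of the relevant lattice in $\Z^n$ by a divisor of a bounded constant depending on the type, which does not affect the generic argument.

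The main obstacle is therefore not conceptual but a finite case analysis over small $(q, n)$, principally $q = 2$ with small rank, where the orders $q^{d_i} \pm 1$ can share accidental common factors with $q^j - 1$ for some $j < d_i$ and force obvious choices of $\theta$ to have nontrivial stabilizer. In these edge cases one either refines the choice of $w$ (picking a non-Coxeter elliptic element with a smaller centralizer, so that trivial-stabilizer $\theta$ become easier to find) or writes down $\theta$ directly and checks. Carrying out this bookkeeping uniformly across the split classical types $A, B, C, D$ is what the rest of Section~\ref{splitgroups} must absorb.
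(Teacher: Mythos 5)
Your setup coincides with the paper's reduction: take $T=T_0(c)$ for a Coxeter element $c$, note $T$ is anisotropic because $c$ has no eigenvalue $1$, identify the rational Weyl group with $Z_{W_0}(c)=\langle c\rangle$, and identify $\Hom(T(k),\C^\times)$ with $X^*(T_0)/(qc-1)X^*(T_0)$. (One remark: for type $D_n$ the paper replaces the product of the simple reflections by the element $w'=t_nt_mw$, whose signed-cycle type is the Coxeter one, so your uniform choice is reasonable in substance there too.) The genuine gap is what comes after. The whole content of the proposition is the existence of $\theta$ in general position \emph{for every} $q$, in particular $q=2,3$; as the introduction of the paper stresses, the ``generic $(q,n)$'' argument you give (an element of maximal order in the largest cyclic factor, using that $q^{d}\pm1\nmid q^{j}-1$ for $j<d$) is the easy, known part. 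You then defer the rest to ``a finite case analysis over small $(q,n)$, principally $q=2$ with small rank,'' with an unspecified fallback (``refine the choice of $w$''). Nothing in your argument shows the exceptional cases are confined to small rank, and indeed they are not: in the paper the delicate verifications (e.g.\ $q=2$ in type $C_n$, the pair $(q,r)=(2,1)$ in type $D_n$) occur in every rank and are disposed of by exhibiting explicit lattice vectors ($e_1$ for $B_n$, $2e_1$ for $C_n$, $e_1-e_{n+1}$ for $A_n$, $2e_m$ for $D_n$) and checking, for all $r$ and all $q\ge2$, that the linear system $(w\Phi-1)x=w^rv-v$ has no integral solution. Without such an explicit verification (or a proven bound reducing the problem to a genuinely finite list), the proposal does not prove the statement.

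A second, related soft spot: the identification of $\Lambda/(qc-1)\Lambda$ with a product of cyclic groups of orders dividing $q^{d_i}\pm1$ is straightforward for $\Z^n$ with a signed-permutation action, but $\Lambda$ here is the \emph{root} lattice of the adjoint group, which in types $A_{n-1}$, $C_n$, $D_n$ is a proper sublattice of $\Z^n$ (sum zero, respectively even sum). Your remark that the index is a bounded constant does not by itself transfer the freeness of the $\langle c\rangle$-action on a chosen character from the big lattice to the root lattice; the $2$-power index interacting with $q=2$ is exactly where the paper has to argue separately (gcd arguments in types $C$ and $D$). So the reduction is set up correctly, but the decisive computations that make the statement true for all classical types and all residue cardinalities are missing.
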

%\begin{remark}
%The conditions ``classical'', ``simple'' and ``adjoint'' on $G$ may all be dropped. 
%\end{remark}
\begin{proof}
To prove this proposition we will translate it to an explicit combinatorial problem on Dynkin diagrams. We will then use the classification of such diagrams and calculate to obtain the desired result. 

Let $(T_0, B_0)$ be a pair consisting of a split maximal torus and a Borel subgroup which contains $T_0$, both defined over $k$. Let $w \in W_0(\lik)$ be a Coxeter element and let $T = T_0(w) \subset G$ be the maximal torus corresponding to the Frobenius conjugacy class $\li w \subset W_0(\lik)$ generated by $w$. 

Pick $g \in G(\lik)$ such that $g^{ -1} \Frob_q(g) \in N_0(\lik)$ and $\pi(g^{-1} \Frob_q(g)) = w \in W_0(\lik)$. The conjugation-by-$g$-map $G_{\lik} \to G_{\lik}$ induces an isomorphism from $T_{0,\lik}$ to ${}^g T_{0,\lik} = T_{\lik}$, and in turn an isomorphism $X^*(T) \isomto X^*(T_0)$. 
Under this isomorphism, the Frobenius $\Frob_q$ on $X^*(T)$ corresponds to the automorphism $w \Frob_q$ on $X^*(T_0)$, and similarly $\Phi$ on $X^*(T)$ corresponds to $w \Phi$ on $X^*(T_0)$. 

To see that the torus $T_0(w)$ is anisotropic it suffices to prove that $X^*(T_0(w))^{\Frob_q} = 0$. We will verify this in each individual case below. 
%%The order of the Frobenius $\Frob_q$ acting on $T_0(w)$ is equal to $\# \langle w \rangle$, and therefore equals to the order of the rank of the group. This implies that $T_0(w)$ is anisotropic, and in particular it is elliptic. 

The rational Weyl group $W_T(k)$ of the torus $T$ is equal to the set of those elements $w \in W_T(\lik)$ in the absolute Weyl group whose action on the characters $X^*(T)$ is equivariant for the Frobenius $\Frob_q$. 
Therefore, under the bijection $W_T(\lik) \isomto W_0(\lik)$, the image of $W_T(k)$ in $W_0(\lik)$ is equal to the set of all $t \in W_0(\lik)$ such that $t (w \Frob_q) = (w \Frob_q) t$.
Because $T_0$ is split, the automorphism $\Frob_q$ acts trivially on $X^*(T_0)$. Therefore, the image of $W_T(k)$ in $W_0(\lik)$ is the centralizer of $w \in W_0(\lik)$. Because $w$ is a Coxeter element this centralizer is equal to the subgroup generated by $w\in W_0(\lik)$. 

Choose an embedding of groups $\iota \colon \lik^\times \hookrightarrow \C^\times$. Then, using $\iota$, we may identify $\Hom(T(k), \lik^\times)$ with $\Hom(T(k), \C^\times)$. The set $\Hom(T(k), \C^\times)$ is the set of characters of $T(k)$. We are interested in the subset of $\Hom(T(k), \C^\times)$ consisting of those characters which are in general position. 
Under the bijection
$\frac { X^*(T_0) } { (w \Phi - 1) X^*(T_0) } \isomto \Hom(T(k), \C^\times)$
the action of the group $W_T(k)$ on the right corresponds to the action of the subgroup $\langle w \rangle \subset W_0(\lik)$ on the set on the left. 
The problem of finding an elliptic torus together with a character in general position is thus translated into a problem of the root system of $(G, B_0, T_0)$: Pick any Coxeter element $w$ in the Weyl group of the root system, and find an element $v$ in $\frac{X^*(T_0)}{ (w \Phi - 1) X^*(T_0)}$ which is such that $w^r v \neq v$ for all $r = 1 \ldots h$, where $h = \# \langle w \rangle$ is the Coxeter number of $G$. 

Before starting the computations, let us make the following 3 remarks to clarify. First, the relative $q$-Frobenius $\Phi$ acts on $X^*(T_0)$ by $\chi \mapsto \chi^q$ ($T_0$ is split). And second, because the group $G$ is adjoint, the root lattice of $G$ is equal to the weight lattice $X^*(T_0)$. Finally, the facts on Dynkin diagrams that we state below come from Bourbaki \cite[chap 6, \S 4 -- \S 13]{bourbakilie}.

\medskip

\noindent $\bullet$\quad $G$ is split of type $B_n$ with $n \in \Z_{\geq 2}$. The root system of $G$ may be described as follows. Let $V = \R^n$  with its canonical basis $e_1, \ldots, e_n$ and the standard inner product.  Define
$\alpha_1 = e_1 - e_2, \alpha_2 = e_2 - e_3, \ldots, \alpha_{n - 1} = e_{n - 1} - e_n, \alpha_n = e_n$. The elements $\alpha_1, \ldots, \alpha_n \in \Z^n$ are the simple roots, and the root lattice is equal to $\Z^n \subset \R^n$. The element $w = w_{\alpha_1} w_{\alpha_2} \cdots w_{\alpha_n}$ is a Coxeter element of the Weyl group; it acts on $\R^n$ by $(x_1, \ldots, x_n) \mapsto (-x_n, x_1, \ldots, x_{n - 1})$. It is clear that there are no elements in the root lattice invariant under the action of $w \Frob_q$. This implies that $T_0(w)$ is anisotropic. 

We claim that the element $e_1 \in \Z^n$ reduces to an element of $\Z^n/(w \Phi - 1) \Z^n$ in general position. The order of $w$  is equal to $2 n$, so $\# \stab_{\langle w \rangle}(v)$ divides $2n$. Therefore, it suffices to check that for all $r \in \{1,
 \ldots, n\}$ we have $w^r(e_1) - e_1 \notin (w \Phi - 1) \Z^n$. 
 
 We distinguish cases. Assume first $r = n$. Then $w^r$ acts on $V$ by $v \mapsto -v$. We have $w^n(e_1) - e_1 = (-2, 0, \ldots, 0)$. Assume that we have an $x = (x_1, \ldots, x_n) \in \Z^n$ with $(w \Phi - 1)x = (-2, 0, 0, \ldots, 0)$. Then
 \begin{equation}\label{blmonamib}
-q x_n - x_1 = -2, \ q x_1 - x_2 = 0, \ q x_2 - x_3 = 0, \ \ldots, \ q x_{n - 1} - x_n = 0.
\end{equation}
From this we get $x_n = q^{n - 1} x_1$, and $-2 = -q^n x_1 - x_1 = -(1 + q^n)x_1$ which is not possible. So we have dealt with the case $r = n$.

Now assume that $r \in \{1, \ldots, n - 1\}$. Then $w^r(e_1) - e_1 = e_{r+1} - e_1$. Assume that we have an $x = (x_1, \ldots, x_n) \in \Z^n$ such that
\begin{equation}\label{blmonami}
-q x_n - x_1 = -1, \quad  
q x_r  - x_{r+1} = 1, \quand 
 q x_{i -1}    - x_{i} = 0 \quad (\forall i \notin \{1, r+1\}). 
\end{equation}
We find
$$
x_n = q^{n - r - 1} x_{r + 1} = q^{n - r - 1} (q x_r - 1) = q^{n - r} x_{r} - q^{n - r - 1} = q^{n - 1} x_1 - q^{n - r - 1},
$$
and $x_1 - 1 = -q x_n = -q (q^{n - 1} x_1 - q^{n - r -1}) $, 
which implies
$$
x_1 = \frac {q^{n - r} + 1}{q^n + 1}, 
$$
but $|q^{n - r} - 1|_\infty < |q^n + 1|_\infty$, so $x_1$ is not integral: contradiction. This completes the proof that $e_1 \in X(T)$ is a character in general position in case $G$ is of type $B_n$.  

\medskip
\noindent $\bullet$ \quad  $G$ is split of type $C_n$ with $n \in \Z_{\geq 2}$. 
The root system of $G$ may be described as follows. Let $V = \R^n$  with its canonical basis $e_1, \ldots, e_n$ and the standard inner product.  Define
$\alpha_1 = e_1 - e_2, \alpha_2 = e_2 - e_3, \ldots, \alpha_{n - 1} = e_{n - 1} - e_n, \alpha_n = 2 e_n$. The elements $\alpha_1, \ldots, \alpha_n \in \Z^n$ are the simple roots, and the root lattice $\Lambda$ is equal to the set of $(x_1, \ldots, x_n) \in \Z^n \subset \R^n$ with $\sum_{i=1}^n x_i \equiv 0 \mod 2$. The element $w = w_{\alpha_1} w_{\alpha_2} \cdots w_{\alpha_n}$ is a Coxeter element of the Weyl group; it acts on $\R^n$ by $(x_1, \ldots, x_n) \mapsto (-x_n, x_1, \ldots, x_{n - 1})$.
It is clear that there are no elements in the root lattice invariant under the action of $w \Frob_q$. This implies that $T_0(w)$ is anisotropic. 

We claim that the element $2 e_1 \in \Lambda$ reduces to an element of $\Lambda/(w \Phi - 1)\Lambda$ in general position. It suffices to verify that $w^r (2e_1) - 2e_1 \notin (w \Phi -1) \Lambda$ for all $r \in \{1, \ldots, n\}$. 
Let $x = (x_1, \ldots, x_n) \in \R^n$ be the vector satisfying the equations in Equation \ref{blmonami}. Then the vector $x' := 2x$ satisfies $w^r(2e_1) - 2e_1 = (w \Phi - 1) x'$. 
Therefore, 
$$
x_1' = 2 \cdot \frac {q^{n - r} + 1}{q^n + 1}.
$$
For $q \neq 2$ we have $2|q^{n - r} + 1|_\infty < |q^n + 1|_\infty$, and for $q = 2$ the numerator and denominator are coprime. Therefore $x_1$ is not integral.

\medskip
\noindent $\bullet$\quad $G$ is split of type $A_n$ with $n \in \Z_{\geq 1}$. Consider inside $\R^{n + 1}$ the hyperplane $V$ with equation $\sum_{i =1}^{n + 1} \xi_i = 0$. 
Define
$\alpha_1 = e_1 - e_2, \alpha_2 = e_2 - e_3, \ldots, \alpha_n = e_n - e_{n + 1}$ (simple roots), $\Lambda = \Z^{n + 1} \cap V$ (root lattice), and $w = w_{\alpha_1} w_{\alpha_2} \cdots w_{\alpha_n}$ (Coxeter element). The element %$w_{\alpha_i}$ for $i \in \{1, \ldots, n\}$ act on $\R^{n+1}$ by switching $e_i$ with $e_{i+1}$ and leaving invariant all the other $e_j$ ($j \notin \{i, i+1\}$).
%Therefore, 
$w$ acts on $V \subset \R^{n + 1}$ by rotation of the coordinates: $(x_1, x_2, \ldots, x_n, x_{n + 1}) \mapsto (x_{n + 1}, x_1, x_2, \ldots, x_{n})$. We have $(w \Phi - 1)(x_1, \ldots, x_{n + 1}) = (q x_{n + 1} - x_1, q x_1 - x_2, q x_2 - x_3, \ldots, q x_n - x_{n + 1})$. 
It is clear that there are no elements in the root lattice invariant under the action of $w \Frob_q$. This implies that $T_0(w)$ is anisotropic. 

We claim that the element $v := e_1 - e_{n + 1} \in \Lambda$ reduces to an element of $\Lambda /(w\Phi - 1) \Lambda$ which is in general position.  The order of $w$ equals $n + 1$. Let $r \in \{1, \ldots, n\}$. Suppose for a contradiction that $w^r(v) - v = (e_{r + 1} - e_r) - (e_1 - e_{n + 1}) \in (w\Phi - 1) \Lambda$. Then we have an element $(x_1, \ldots, x_{n + 1}) \in \Lambda$ such that
\begin{align*}
& q x_{n + 1} - x_1 = -1, \ q x_{r-1} - x_{r} = -1,\ q x_{r} - x_{r + 1} = 1,\ q x_n - x_{n + 1} = 1, \cr
& q x_{i - 1} - x_i = 0 \quad (\forall i\notin \{r+1, r, 1, n + 1\}). 
\end{align*}
By substitution we deduce from this
$q^{n + 1} x_{n + 1} = x_{n + 1} - q^{n} - q^{n + 1 - r} + q^{n -r} + 1$.
But, $q^{n + 1} - 1 > q^n + q^{n - r + 1} - q^{n - r} - 1$,
so $x_{n + 1}$ cannot be integral: contradiction. 
%From the last line we deduce that
%$x_{r - 1} = q^{r - 2}x_1$ and $x_n = q^{n - r - 1} x_{r + 1}$. 
%Then, substituting and simplifying this with the first line gives that
%$$
%x_{n + 1} = \frac {1 - q^{n + 2} - q^{n - r + 2}} { q^{n + 3} - 1}.
%$$
%But the right hand side does not lie in $\Z$: 
%$$
%|1 - q^{n + 2} - q^{n - r + 2}|_\infty = q^{n - r + 2} + q^{n + 2} - 1 < 2q^{n + %2} - 1 \leq |q^{n + 3} - 1|_\infty. 
%$$
%This proves that $v$ is a element in general position for the root system $A_n$. 

\medskip
\noindent $\bullet$\quad $G$ is split of type $D_n$ with $n \in \Z_{\geq 4}$. Define
$\alpha_1 = e_1 - e_2$, $\alpha_2 = e_2 - e_3$, $\ldots$, $\alpha_{n - 1} = e_{n -1} - e_{n}$, $\alpha_n = e_{n - 1} + e_n$ (simple roots), $\Lambda$ the set of $(x_1, \ldots, x_n) \in \Z^n$ such that $\sum_{i=1}^n x_i \equiv 0 \mod 2$ (root lattice).

Unfortunately the above procedure to produce anisotropic tori and characters in general position does not work for this group $G$ for the following reason. Let $w = w_{\alpha_1} \cdots w_{\alpha_n}$ be the Coxeter element of the Weyl group which is the product of the reflections in the simple roots. Then $w$ acts on $V$ by
$$
(x_1, x_2, \ldots, x_n)\mapsto (-x_n, x_1, \ldots, x_{n-2}, -x_{n-1}). 
$$
This implies that the vector $(2, \ldots, 2, -2) \in \Lambda$ is stable under the action of Frobenius and thus the corresponding torus is not anisotropic. 

Let $W_0$ be the Weyl group of the system $D_n$. We have a split exact sequence
\begin{equation}\label{cath}
1 \to (\Z^\times)^n_{\det = 1} \to W_0 \to \iS_n \to 1,
\end{equation}
where $\iS_n$ acts on $\Z^n$ via the natural action and an $\eps = (\eps_i) \in (\Z^\times)^n_{\det = 1}$ acts on a vector $e_i \in \Z^n$ of the standard basis by $\eps e_i = \eps_i e_i$. 

Write $n = m +1$. Let $w = (123 \ldots m) \in \iS_n$. Write $t_k \in (\Z^\times)^n$ for the element with $-1$ on the $k$-th coordinate, and with $1$ on all other coordinates. Define $w' = t_nt_m w \in W_0$. We consider the maximal torus $T$ in $G$ of type $w'$. The action of $\Frob_q$ on the character group of this torus is given by
$$
\Z^n \owns (x_1, \ldots, x_m, x_n) \mapsto (x_m, x_1, \ldots, -x_{m-1}, -x_n).
$$
We see that there are no non-zero vectors in $\Z^n$ which are invariant under this action. Therefore the torus $T$ is anisotropic. 

The rational Weyl group of $T$ is the set of $s \in W_0$ which commute with $w'$. Let us compute this group. Write $\varphi \colon W_0 \surjects \iS_n$ the natural surjection (see Equation \ref{cath}). Let $s \in W_T(k)$, then $w = \varphi(w') = \varphi(s w s^{-1})$. Therefore $\varphi(s)$ commutes with $w$. This implies that $\varphi(s)$ is a power of $w$. Write $s = \eps w^k$ for some $\nu \in (\Z^\times)^n_{\det = 1}$. We have
$$
s t_n s^{-1} = t_n, \quad s t_m s^{-1} = t_{w^k(m)}, \quad \textup{and} \quad sws^{-1} \eps w^k w w^{-k} \eps = \eps w \eps. 
$$
Therefore
$$
s(w')s^{-1} = s (t_n t_m w) s^{-1} = t_n t_{w^{k}(m)} \eps w\eps, 
$$
which is equivalent to
$$
(\eps_{w(i)} \eps_i) = t_{w^k(m)} t_m. 
$$
A priori there are $4$ solutions $\eps \in (\Z^\times)^n$ of this equation. When we add the condition $\det(\eps) = 1$, then precisely $2$ of those solutions remain. 

Let $\eps \in (\Z^\times)^n_{\det = 1}$ be such that $(\eps_{w(i)} \eps_i) = t_{w(m)} t_m$. We have an exact sequence
$$
1 \to \{1, \nu\} \to W_T(k) \to \langle \eps w \rangle \to 1, 
$$
where $\nu \in (\Z^\times)^n_{\det = 1}$ is given by $\nu_i = -1$ for $i \leq m$ and $\nu_n = (-1)^m$. 

We claim that $v = 2e_m \in \Lambda$ reduces to an element of $\Lambda / (w' \Phi - 1) \Lambda$ in general position. Assume that
$$
(w' \Phi - 1)(x_1, \ldots, x_m, x_n) = (qx_m - x_1, qx_1 - x_2, \ldots, -qx_{m-1} - x_m, -qx_n - x_n).
$$
We ignore the last coordinate, and only work with the vector $(x_1, \ldots, x_m)$. By substitution we deduce that $q^m x_m = -2 - x_m \pm 2 q^{m-r}$. This implies 
\begin{equation}\label{POI}
x_m = 2\frac{ q^{m - r} \pm 1}{q^m + 1}. 
\end{equation}
For $(q,r) \neq (2,1)$ we have $|q^m + 1|_\infty > 2|q^{m-r} \pm 1|_\infty$, and for $(q,r)=(2,1)$ the numerator and denominator have a gcd which divides $3$, so then $q^m + 1 = 3$ and we must have $n = 1$, but we assumed $m \geq 2$. Therefore $x_n$ is not integral. 
\end{proof}

\section{The unitary groups}\label{unitarygroups}

\begin{proposition}\label{computationunitarygroup}
Let $n \geq 3$. 
The simple adjoint group over $k$ with root system ${}^2A_{n-1}$ has an anisotropic maximal torus $T$ together with a character $T(k) \to \C^\times$ in general position. 
\end{proposition}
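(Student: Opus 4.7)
The plan is to mimic the proof of Proposition \ref{computation}, replacing the split pair $(T_0, B_0)$ by the \emph{quasi-split} pair over $k$: $T_{0, \lik}$ remains split, so we still have
\[
X^*(T_0) = \bigl\{ (x_1, \ldots, x_n) \in \Z^n : \textstyle\sum x_i = 0 \bigr\},
\]
but $\Frob_q$ now acts through the outer diagram automorphism $\tau \colon (x_1, \ldots, x_n) \mapsto (-x_n, \ldots, -x_1)$, while $\Phi$ is still multiplication by $q$. Thus for any $w \in W_0(\lik) = \iS_n$, the Frobenius on the character lattice of $T_0(w)$ is realized by $w \tau$ on $X^*(T_0)$, and the exact sequence \eqref{ES} becomes
\[
0 \to X^*(T_0) \xrightarrow{\,q w \tau - 1\,} X^*(T_0) \to \Hom\bigl( T(k), \lik^\times \bigr) \to 0.
\]

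Set $m = \lfloor n/2 \rfloor$ and take the ``twisted Coxeter'' element $w = s_{\alpha_1} s_{\alpha_2} \cdots s_{\alpha_m}$. A direct computation shows that $w \tau$ acts as a signed permutation $(x_1, \ldots, x_n) \mapsto (-x_{\sigma(1)}, \ldots, -x_{\sigma(n)})$, where $\sigma \in \iS_n$ is a single $n$-cycle when $n$ is odd, and an $(n - 1)$-cycle with the extra fixed point $\{m + 1\}$ when $n$ is even. A fixed vector $v$ of $w \tau$ satisfies $v_i = -v_{\sigma(i)}$; traversing any cycle of odd length (which is the case here, since both $n$ and $n - 1$ take an odd value in the two parities) forces $v = 0$, so $T = T_0(w)$ is anisotropic. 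As in Section \ref{splitgroups}, the rational Weyl group $W_T(k)$ consists of the permutations in $W_0$ commuting with the signed permutation $w \tau$, which is equivalent to commuting with $\sigma$, so $W_T(k) = \langle \sigma \rangle$ is cyclic of order $n$ (respectively $n - 1$) when $n$ is odd (respectively even).

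There remains the combinatorial task of producing $v \in X^*(T_0)$ whose image modulo $(q w \tau - 1) X^*(T_0)$ has trivial $\langle \sigma \rangle$-stabilizer, i.e.\ verifying $\sigma^r v - v \notin (q w \tau - 1) X^*(T_0)$ for $1 \le r < |W_T(k)|$. My first attempt is $v = e_1 - e_2$: setting $\sigma^r v - v = (q w \tau - 1) x$ for $x = (x_1, \ldots, x_n)$ and propagating the resulting first-order recurrence around the cycle of $\sigma$, exactly as in equations \eqref{blmonami} and \eqref{POI}, expresses $x_1$ as a rational function of $q$ whose denominator is $q^N + 1$ (with $N$ the cycle length) and whose numerator is a polynomial of degree at most $N - 1$. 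An elementary $q$-adic size estimate then shows $x_1 \notin \Z$ for all but a short finite list of exceptional triples $(q, r, n)$.

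The hard part, exactly in the spirit of the exception $(q, r) = (2, 1)$ in the $D_n$ computation, is handling those exceptional pairs. A stark instance is $n = 3$, $q = 2$: here the twisted Coxeter torus has $|T(k)| = q^2 - q + 1 = 3$ and $|W_T(k)| = 3$, so $W_T(k)$ is forced to act trivially on the dual group and \emph{no} general-position character exists for this $w$. The fallback for such cases is to replace $w$ by the longest Weyl element $w_0$, for which $w_0 \tau = - I$ on $X^*(T_0)$, making $|T(k)| = (q + 1)^{n - 1}$ and $W_T(k) = \iS_n$; any vector $v \in X^*(T_0)$ with pairwise distinct coordinates modulo $q + 1$ (which is possible as soon as $q + 1 \ge n$) then has trivial stabilizer. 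The finitely many residual pairs $(q, n)$ with $q + 1 < n$ where \emph{both} strategies fail can be disposed of by direct inspection, mimicking the exception handling of the $D_n$ argument.
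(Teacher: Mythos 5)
Your construction in the main case is the same as the paper's: your ``twisted Coxeter'' torus is exactly the Coxeter torus of the unitary group (Frobenius acting on the sum-zero lattice by $-q$ times an $n$-cycle for $n$ odd, an $(n-1)$-cycle plus a fixed point for $n$ even), the identification of $W_T(k)$ as the cyclic group generated by the cycle is correct, and the intended verification is the same explicit congruence computation modulo $(w\Phi-1)\Lambda$. But you never carry that computation out, and that is where all the content lies: you do not solve the recurrence for your $v=e_1-e_2$, you do not determine the exceptional cases, and your assertion that the size estimate fails only for ``a short finite list of exceptional triples $(q,r,n)$'' is not justified -- the estimate degenerates for pairs $(q,r)$ independently of $n$ (in the paper's normalization the problematic pairs are $(2,1)$, $(2,2)$, $(3,1)$), so a priori the exceptions form infinite families in $n$, which must be disposed of by sharper arguments valid for \emph{all} $n$. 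This also undercuts your final step: the set of pairs $(q,n)$ with $q+1<n$ is infinite, and for $q=2,3$ and large $n$ your $w_0$-fallback is unavailable, so ``direct inspection of finitely many residual cases'' cannot close the argument; you would have to prove non-integrality for the exceptional $(q,r)$ uniformly in $n$, exactly the refined estimates the paper supplies. The even-$n$ bookkeeping (the fixed coordinate of the cycle) is likewise left untouched, whereas the paper sidesteps it by reducing $n$ even to $n-1$ odd via $T=T_{n-1}\times U_1$.

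On the other hand, your observation about $n=3$, $q=2$ is correct and genuinely valuable: there $\Lambda/(w\Phi-1)\Lambda$ has order $q^2-q+1=3$ while $W_T(k)$ is cyclic of order $3$, so the action is trivial and \emph{no} character of the Coxeter torus is in general position. This is in fact a case where the paper's own treatment of the exception $(q,r)=(2,1)$ goes wrong: for $n=3$, $q=2$ the system $(w\Phi-1)x=wv-v$ with $v=e_1-e_3$ has the integral solution $x=(0,-1,1)\in\Lambda$, contrary to the non-integrality claimed there. Your fallback -- the torus of type $w_0$, i.e.\ $(U_1)^n$ modulo the center, with a sum-zero vector whose coordinates are pairwise distinct modulo $q+1$ -- is sound (one only needs to check such a vector exists, which it does for $q+1\ge n$ after adjusting coordinates by multiples of $q+1$), and since $q+1\ge n$ holds at $(q,n)=(2,3)$ it really does repair this case; so your two-torus strategy is the right fix. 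As written, however, the proposal is an outline: until the integrality estimates are actually performed and the residual cases are pinned down and shown to satisfy $q+1\ge n$, it does not yet constitute a proof.
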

\begin{proof}
Let $E \subset \lik$ be the quadratic extension of $k$, and let $\sigma \colon E \isomto E$ be the unique non-trivial $k$-automorphism of $E$. The unitary group $\U_n$ over $k$ is the group of matrices $g \in \Res_{E/k} \Gl_{n,E}$ such that $\sigma(g)^\textup{t}g = 1$. 
The adjoint group $\U_{n, \ad}$ of $\U_n$ is the group $\PSU_n$ and this group has root system ${}^2 A_{n-1}$. 

We will distinguish cases between $n$ odd and $n$ even. Assume first that $n$ is odd. 
Let $T_0$ be the torus $(U_1)^n$ embedded diagonally in $U_n$. Then $\Frob_q$ acts on $X^*(T_0)$ by $x \mapsto -x$. We have $X^*(T_0) = \Z^n$ and under this equality, the Weyl group $W_{T_0}(\lik)$ is identified with $\iS_n$. Let $w = (123\ldots n) \in \iS_n = W_{T_0}(\lik)$ and let $T$ be the torus $T_0(w)$. The relative Frobenius $\Phi$ acts on $X^*(T) = \Z^n$ by 
\begin{equation}\label{frobeniusaction}
(x_1, \ldots, x_n) \mapsto (-x_n, -x_1, \ldots, -x_{n-1}).
\end{equation}

We claim that the torus $T$ is anisotropic over $k$. To see this, let $x = (x_1, \ldots, x_n) \in \Z^{n}$ be $w\Frob_q$-invariant. Then
$(x_1, \ldots, x_n) = (-x_n, -x_1, \ldots, -x_{n-1})$,
it follows $x_1 = (-1)^n x_1$, and because $n$ is odd, this implies $x_1 = 0$. The same argument applies to the other $x_i$, and therefore $x = 0$. We proved $X^*(T)^{\Frob_q} = 0$, and thus $T$ is anisotropic. 

The center of $U_n$ is equal to $U_1$ embedded diagonally. Let $\Tad$ be the image of the torus $T$ in the adjoint group of $\U_n$. Then $\Lambda = X^*(T_\ad)$ is the subset of $\Z^n$ consisting of those vectors $x \in \Z^n$ such that $\sum_{i=1}^n x_i = 0$. The Weyl group is $\iS_n$ and it acts on $\Lambda$ via the restriction of the natural action $\iS_n \cal \Z^n$ to $\Lambda$. The rational Weyl group $W_{\Tad}(k) \subset \iS_n$ is the set of elements $w$ commuting with $\Frob_q$. The rational Weyl group is equal to $\langle w \rangle \subset \iS_n$ because all elements of the Weyl group commute with $-1$.

To find an element in general position we must find a vector $v \in \Lambda$ which is such that $w^r(v) - v \notin (\Phi - 1)\Lambda$ for all $r = 1\ldots n-1$. We claim that $v = e_1 - e_n \in \Lambda$ is such a vector.

Assume for a contradiction that $(w\Phi - 1)x = w^r v - v$ for some $x \in \Lambda$. Then
$$
(-qx_n - x_1, -qx_1 - x_2, \ldots, -qx_{n-1} - x_{n}) = (e_{r+1} - e_r) - (e_{1} - e_{n}).
$$
By substitution we deduce from this
$(-q)^{n} x_{n} = x_{n} - (-q)^{n-1} - (-q)^{n - r} + (-q)^{n -1 -r} + 1$, and thus
$$
x_n = -\frac{  (-q)^{n-1} + (-q)^{n - r} - (-q)^{n -1 -r} - 1}{(-q)^n - 1} \in \Z.
$$
We show that this is not possible. 
We will distinguish cases. Assume first that the pair $(q,r)$ is such that the inequality $|(-q)^r + (-q) + 1| < q^{r+1} - 2$ holds. We may then estimate
\begin{align*}
|(-q)^{n-1} + (-q)^{n - r} - (-q)^{n -1 -r} - 1|_\infty &= |(-q)^{n-1-r}((-q)^r + (-q) - 1) - 1| \cr 
&\leq  q^{n-1-r} \cdot | (-q)^r + (-q) - 1 | + 1 \cr
%q^{n-1-r} \cdot | (-q)^r + (-q) - 1 | + 1 
&< q^n - 2 q^{n-1-r} + 1 \leq q^n - 1 \leq |(-q)^n - 1|_\infty.
\end{align*}
This proves that $x_n$ cannot be integral. 

Let us determine the pairs $(q,r)$ for which the above inequality is not true. We have $|(-q)^r + (-q) + 1| \leq q^r + q + 1$.
The inequality $q^r + q + 1 < q^{r+1} - 2$ does not hold for $(q,r) \in \{(2,1), (2,2), (3,1)\}$. To see that it holds in all other cases, observe first that if the inequality holds for $(q,r)$ then it holds also for $(q, r+1)$. By direct verification we see that it holds for $(2,3)$, $(3,2)$, and for $(q, 1)$ in case $q > 3$. 

For $(q, r) \in \{(2,2), (3,1)\}$ we have the inequality $|(-q)^r + (-q) + 1| < q^{r+1} - 2$, so the above proof also applies to these cases. 
In case $(q, r) = (2, 1)$, then we obtain $x_1 = -1 + \frac{(-2)^{n-2}}{(-2)^n - 1}$, which is not integral. 
This completes the proof for $n$ odd. 

Now assume that $n$ is even. Write $n = m + 1$, so that $m$ is odd. Let $T_0 \subset U_n$ be the torus $(U_1)^n$ embedded on the diagonal of $U_n$. Let $w = (123\ldots m) \in \iS_n = W_{T_0}(\lik)$, and consider the torus $T := T_0(w)$. We have $X^*(T) = \Z^n$ on which the Frobenius acts by $-w$. 
The rational Weyl group $W_T(k) \subset \iS_n$ is the set of $s \in \iS_n$ which commute with $-w$. Therefore $W_T(k) = \langle w \rangle$, and in particular $W_T(k) \subset \iS_m$. Let $\Tad$ be the image of the torus $T$ in the adjoint group of $\U_n$. The lattice $X^*(\Tad) = \Lambda \subset \Z^n$ is the set of vectors $(x_1, \ldots, x_n)$ with $\sum_{i = 1}^n x_i = 0$. 
The tori $T$ and $\Tad$ are anisotropic. 
Write $T = T_m \times U_1$, where the torus $T_m$ is the maximal torus in the group $U_m$ that we considered in the odd case. The rational Weyl group $W_T(k)$ preserves this decomposition of $T$.
We have the map $X^*(T_{m, \ad}) \to X^*(\Tad)$, $(x_1, \ldots, x_m) \mapsto (x_1, \ldots, x_m, 0)$. This map is $\iS_m$-equivariant, and it induces a map
$$
\frac {X^*(T_{m, \ad})}{(w\Phi - 1) X^*(T_{m, \ad})} \injects \frac {X^*(T_\ad)}{(w\Phi - 1)X^*(\Tad)},
$$
which is $w$-equivariant. 
Therefore characters in general position are send to characters in general position. By the argument above we know that $T_m$ has characters in general position, so this completes the proof for $n$ even. 
%% 
%% Let $\chi \colon T_m(k) \to \C^\times$ be a character in general position. Via the surjection $T \surjects T_m$ we may extend this character to a character $\chi_T$ of $T$. Because the rational Weyl group of $T$ fixes $T_m$ we see that $\chi_T$ is in general position. 
%% 
%% The rational Weyl group of $T$ fixes $T_m$. 
%% 
%% We claim that the element $v = e_1 - e_n \in \Lambda$ reduces to an element of $\Lambda/(w \Phi - 1) \Lambda$ in general position. Assume for a contradiction that we have an element $(x_1, \ldots, x_n) \in \Lambda$ such that $v - w^r(v) = e_1 - e_m - (e_{m + 1 - r} - e_{m - r}) \in (w\Phi - 1)\Lambda$ for some $r \in \{1, \ldots, m-1\}$. 
%% 
%% Observe that the argument above in the odd case from Equation \ref{unitaryoddargument} onwards does not use that $n$ is odd. Also, observe that the argument proves something slightly more stronger than stated: It proves that there exist no $x \in \Z^n$ which satisfies the equations in Equation \ref{unitaryoddargument} (instead of 
%% $x \in \Lambda$ as stated above Equation \ref{unitaryoddargument}). 
%% Therefore, we may consider the vector $(x_1, \ldots, x_m) \in \Z^m$. By construction, this vector satisfies the equations in Equation \ref{unitaryoddargument} with $n$ replaced by $m$, and we find a contradiction.
%% 
%% This completes the proof.
\end{proof}

\section{The non-split orthogonal groups}\label{orthogonalgroups}

\begin{proposition}\label{computationorthogonalgroup}
Let $n \in \Z_{\geq 4}$. The simple adjoint group $G$ over $k$ with root system ${}^2 D_{n-1}$ has a maximal torus $T \subset G$ with a character $T(k) \to \C^\times$ in general position. 
\end{proposition}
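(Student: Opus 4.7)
The plan is to follow the same template as the earlier propositions. Let $m := n-1 \ge 3$ and realize the root system $D_m$ in $\R^m$ in the standard way: simple roots $\alpha_i = e_i - e_{i+1}$ for $i<m$ and $\alpha_m = e_{m-1}+e_m$, with root lattice $\Lambda = \{x\in\Z^m : \sum_i x_i \equiv 0 \pmod 2\}$. Since $G$ is adjoint we have $X^*(T_0) = \Lambda$; and since $G$ is the non-split quasi-split form, the Frobenius acts on $\Lambda$ by the diagram automorphism $\tau \colon e_m \mapsto -e_m,\ e_i \mapsto e_i$ $(i<m)$, which is an element of $W(B_m) \setminus W(D_m)$.

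I would take the ``twisted Coxeter'' element $w := (1\,2\,\cdots\,m) \in \iS_m \subset W_0$ and consider the torus $T := T_0(w)$. The composite Frobenius on $X^*(T)$ is then $w\tau \colon (x_1,\ldots,x_m) \mapsto (-x_m,x_1,\ldots,x_{m-1})$, which visibly fixes only the zero vector, so $T$ is anisotropic. As a signed permutation, $w\tau$ is a single ``negative cycle'' of length $2m$, i.e.\ a Coxeter element of $W(B_m)$, and its centralizer in $W(B_m)$ is $\langle w\tau \rangle$ of order $2m$; a count of sign changes shows $(w\tau)^k \in W_0 = W(D_m)$ iff $k$ is even, giving
$$
W_T(k) \;=\; C_{W_0}(w\tau) \;=\; \langle w\tau \rangle \cap W_0 \;=\; \langle (w\tau)^2 \rangle,
$$
cyclic of order $m$.

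I then claim that $v := 2e_1 \in \Lambda$ is in general position. Using $(w\tau)^r(e_1) = e_{r+1}$ for $0 \le r<m$, $(w\tau)^m(e_1) = -e_1$, and $(w\tau)^r(e_1) = -e_{r-m+1}$ for $m<r<2m$, the task is to rule out $(w\tau)^{2k}v - v \in (w\tau\Phi - 1)\Lambda$ for $k = 1,\ldots,m-1$, where $(w\tau\Phi-1)$ acts on $\Z^m$ by $(x_1,\ldots,x_m) \mapsto (-qx_m-x_1, qx_1-x_2, \ldots, qx_{m-1}-x_m)$. Solving the resulting linear recursion in each of the three ranges $2k \in \{2,\ldots,m-1\}$, $2k = m$ (only if $m$ is even), and $2k \in \{m+1,\ldots,2m-2\}$ forces $x_1$ to equal a fraction of the form $2(1\pm q^a)/(1+q^m)$ with $0 \le a < m$, or $4/(1+q^m)$ at $r=m$; in every such case the numerator is non-zero with absolute value strictly smaller than $1+q^m$, hence $x_1\notin\Z$.

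The hardest part is the very first choice: one needs a Weyl element $w$ that simultaneously makes the torus anisotropic, keeps the rational Weyl group cyclic and transparent, and yields an integrality test close enough to those of the $B_n$/$C_n$ cases to succeed. Once $w = (1\,2\,\cdots\,m)$ is identified the rest closely parallels the split $B_n$ argument; and because $m \ge 3$ the slack in the inequality $1 + q^m > 2(1 + q^{m-r})$ is comfortable enough in every subcase to avoid the kind of small-$q$ edge case that required extra care for type $C_n$.
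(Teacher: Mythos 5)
Your proposal is correct and follows essentially the same route as the paper: the same quasi-split form, the same torus $T_0(w)$ attached to the $m$-cycle twisted by the diagram automorphism, anisotropy via the absence of fixed vectors, identification of the rational Weyl group as the centralizer of $w\tau$, and the same integrality argument on fractions with denominator $1+q^m$. Your bookkeeping is in fact slightly cleaner than the paper's: recognizing $w\tau$ as a self-centralizing Coxeter element of $W(B_m)$ gives $W_T(k)=\langle (w\tau)^2\rangle$ uniformly in the parity of $m$ (the paper's odd/even case analysis, and its description of the even case, is clumsier), and with $v=2e_1$ the exponents that occur keep all numerators strictly below $1+q^m$, so you genuinely avoid the $(q,r)=(2,1)$ gcd argument the paper needs in its Equation (\ref{POI}).
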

\noindent {\textit{Proof.}} \quad 
Let $J$ be the $2n\times 2n$-matrix consisting of the blocks $\vierkant \ 11\ $ on the diagonal, and all other entries $0$. The group $\OO_{2n}$ over $k$ is the set of matrices $g \in \Gl_{2n,k}$ which are such that $g^\textup{t} J g = J$. The group $\SO_{2n}$ is the group of matrices $g \in \OO_{2n}$ such that  $\det(g) = 1$. The non-split form $\SO_{2n}'$ over $k$ is obtained from $\SO_{2n}$ by twisting the action of $\Frob_q$ with the matrix $s \in \Gl_{2n}$ consisting of the blocks $\vierkant 1\ \ 1$ on the diagonal, except for the last block on the diagonal which is $\vierkant \ 11\ $. This corresponds to replacing the matrix $J$ with the matrix $s J s^{-1} = sJs$ in the definition of the orthogonal group. 

In characteristic $p \neq 2$, the group $\SO_{2n}$ (resp. $\SO_{2n}$) is connected and has root system $D_{n-1}$ (resp. ${}^2 D_{n-1}$). For $p=2$ it is the connected component of identity, $\SO_{2n}^\circ$ (resp. $\SO_{2n}^{\prime \circ}$), that has root system $D_{n-1}$ (resp. ${}^2D_{n-1}$). 

The torus $(\SO_{2}^\circ)^n$ on the diagonal in $\SO_{2n}^\circ$ is a maximal torus, and the torus $T_0 = (\SO_2^\circ)^{n-1} \times U_1$ is a maximal torus of $\SO_{2n}^{\prime \circ}$. We have $X^*(T_0) = \Z^n$ and $\Frob_q$ acts on $X^*(T_0)$ by $(x_1, \ldots, x_n) \mapsto (x_1, \ldots, x_{n-1}, -x_n)$. Let $W_0$ be the absolute Weyl group of $T_0$. We have a split exact sequence
\begin{equation}\label{yas}
1 \to (\Z^{\times})^n_{\det = 1} \to W_0 \to \iS_n \to 1,
\end{equation}
where $\iS_n$ acts on $\Z^n$ by permuting the standard basis vectors, and where an $\eps = (\eps_i) \in (\Z^{\times})^n_{\det = 1}$ acts on a vector $e_i \in \Z^n$ of the standard basis by $\eps e_i = \eps_i e_i$. 

Let $w \in \iS_n \subset W_0$ be the $n$-cycle $(123 \ldots n)$ and consider the torus $T := T_0(w)$. Then $X^*(T) \isomto \Z^n$ via which the action $\Frob_q \cal X^*(T)$ corresponds to the action of $w \Frob_q$ on $\Z^n$. 

We verify that this torus is anisotropic. Let $x = (x_1,\ldots, x_n) \in X^*(T_0)^{w\Frob_q}$. Then
$$
(x_1, \ldots, x_n) = w\Frob_q (x_1, \ldots, x_n) = (-x_n, x_1, \ldots, x_{n-1}),
$$
which implies $x = 0$. Therefore $T$ is anisotropic. 

The rational Weyl group $W_T(k) \subset W_0$ is the set of $s \in W_0$ which commute with $w \Frob_q$. Let us determine this group. Write $\varphi$ for the map $W_0 \surjects \iS_n$ (see Equation \ref{yas}), and write $t_j \in (\Z^\times)^n$ for the element with $-1$ on the $j$-th coordinate, and with $1$ on all other coordinates. Then $\Frob_q = t_n$. 

 If $s \in W_t(k)$, then $s (w t_n) s^{-1} = w t_n$. We apply $\varphi$ to this equality to obtain $w = \varphi(w) = \varphi(s w s^{-1})$, and thus $\varphi(s) \in \iS_n$ commutes with $w$. This implies that $\varphi(s)$ is a power of $w$. Write $s = \eps w^k$ where $\eps \in (\Z^\times)^n_{\det = 1}$. We have
$$
s t_n s^{-1} =\eps w^k (t_n)  w^{-k} \eps = t_{k},
$$
and
$$
s w s^{-1} = \eps w^k w w^{-k} \eps = \eps w \eps. 
$$
Therefore, 
$$
wt_n = s (wt_n) s^{-1} = \eps w \eps \cdot t_{k}. 
$$
This is equivalent to,
$$
\eps w^{-1} \eps w = t_{k} t_n.
$$
Write $\eps = (\eps_i) \in (\Z^\times)^n_{\det = 1}$. Then we have, 
\begin{equation}\label{yacd}
\eps w^{-1} \eps w = (\eps_i) \cdot (\eps_{w(i)}) = (\eps_{i} \eps_{i + 1}) = t_{k} t_n.
\end{equation} 
We will now distinguish cases between $n$ is odd and $n$ is even. Assume first that $n$ is odd. Return to Equation \ref{yacd}, we have $(\eps_i \eps_{i+1}) = t_k t_n$. After the choice of $\eps_n$, the $\eps_i$ for $i < n$ are uniquely determined by this equation. If $\eps$ is one of the solutions, then $-\eps$ is the other solution. We have $\det(-\eps) = (-1)^n \det(\eps) = -\det(\eps)$. Therefore, precisely one of the two solutions has determinant $1$. 
We conclude that the rational Weyl group $W_T(k)$ is equal to $\langle \eps w\rangle$, where $\eps \in (\Z^\times)^n_{\det = 1}$ is the unique element such that $(\eps_i \eps_{i + 1}) = t_1 t_n$. 

Let $\SO_{2n, \ad}^\circ$ be the adjoint group of $\SO_{2n}^\circ$ and let $\Tad$ be the image of the torus $T$ in $\SO_{2n, \ad}^\circ$. Then $X^*(\Tad) \subset X^*(T) = \Z^n$ is the sublattice of elements $(x_1, \ldots, x_n) \in \Z^n$ such that $\sum_{i=1}^n x_i = 0$. 

We claim that the element $v = 2e_n \in \Lambda$ reduces to an element $\li v \in \Lambda / (w\Phi - 1)\Lambda$ in general position. We have
$(\eps w)^r 2e_n = \pm 2 e_{n-r}$, 
for all $r = 1, \ldots, n-1$. We left the sign unspecified, but we mention that it depends on $r$.

Suppose that there exists an $x = (x_1, \ldots, x_n) \in \Lambda$ such that 
$$
(w\Phi - 1)(x_1, \ldots, x_n) = (-qx_n, qx_1, \ldots, qx_{n-1}) - (x_1, \ldots, x_n) = 2e_n \pm 2 e_{r}. 
$$
This implies $-q^n x_n = -2q^{n-r} \pm 2 + x_n$, 
and thus
$$
x_n = 2\frac{ q^{n - r} \pm 1}{q^n + 1}. 
$$
%\end{equation}
%For $(q,r) \neq (2,1)$ we have $|q^n + 1|_\infty > 2|q^{n-r} \pm 1|_\infty$, and for $(q,r)=(2,1)$ the numerator and denominator have a gcd which divides $3$, so then $q^n + 1 = 3$ and we must have $n = 1$, but we assumed $n \geq 4$. Therefore $x_n$ is not integral. 
We have already verified in Equation \ref{POI} that $x_n$ cannot be integral. 
This completes the proof for $n$ odd.

Assume now that $n$ is even. We have $-1 \in (\Z^\times)^n_{\det = 1}$ in case $n$ is even. In Equation \ref{yacd} we found that $(\eps_i \eps_{i+1}) = t_k t_n$. We obtain from this, $\eps_i = \eps_1$ for $i \leq k$ and $\eps_i = -\eps_1$ for $i > k$. Therefore $\det(\eps) = (-1)^k$, independently of $\eps_1$. 
Therefore, $\eps \in (\Z^\times)^n_{\det = 1}$ only if $k$ is even, and if this is the case, then the equation $(\eps_i \eps_{i+1}) = t_k t_n$ has exactly $2$ solutions for $\eps \in (\Z^\times)^n_{\det = 1}$. 

We conclude that $\# W_T(k) = n$, but the group is not cyclic: Pick an $\eps \in (\Z^\times)^n_{\det = 1}$, such that $(\eps_i \eps_{i+1}) = t_2 t_n$ holds. Then the rational Weyl group $W_T(k)$  is equal to $\Z^\times \times  \langle \eps w^2 \rangle$. We have
$$
\forall \nu_1 \in \Z^\times \exists \nu_2 \in \Z^\times:  \quad\quad 
\nu_1 (\eps w^2)^r(2e_n) = \nu_2 2e_{2r}.
$$  
We show that $2 e_n \pm 2e_{n - 2r} \notin (w \Phi - 1) \Lambda$ for all $r = \lbr 1, \ldots, \frac n2\rbr$, and all signs, so that the element $\li {2e_n} \in \Lambda/ (w \Phi - 1) \Lambda$ is in general position. Assume for a contradiction that $x \in \Lambda$ is such that %$(w \Phi - 1) x = 2 e_n \pm 2e_{n - 2r}$.  We have
$$
(w \Phi - 1)(x_1, \ldots, x_n) =  2 e_n \pm 2e_{2r}. 
$$
Then we may proceed as in Equation \ref{POI} to find that $x_n$ is not integral. 

All possible cases are now verified and the proof of Theorem \ref{existcusp} is completed. $\square$

%\addcontentsline{toc}{chapter}{References}
\bibliographystyle{plain}
\bibliography{grotebib}

\def\cftil#1{\ifmmode\setbox7\hbox{$\accent"5E#1$}\else
  \setbox7\hbox{\accent"5E#1}\penalty 10000\relax\fi\raise 1\ht7
  \hbox{\lower1.15ex\hbox to 1\wd7{\hss\accent"7E\hss}}\penalty 10000
  \hskip-1\wd7\penalty 10000\box7} \def\cprime{$'$}
  \def\Dbar{\leavevmode\lower.6ex\hbox to 0pt{\hskip-.23ex \accent"16\hss}D}
  \def\cfac#1{\ifmmode\setbox7\hbox{$\accent"5E#1$}\else
  \setbox7\hbox{\accent"5E#1}\penalty 10000\relax\fi\raise 1\ht7
  \hbox{\lower1.15ex\hbox to 1\wd7{\hss\accent"13\hss}}\penalty 10000
  \hskip-1\wd7\penalty 10000\box7}
  \def\cftil#1{\ifmmode\setbox7\hbox{$\accent"5E#1$}\else
  \setbox7\hbox{\accent"5E#1}\penalty 10000\relax\fi\raise 1\ht7
  \hbox{\lower1.15ex\hbox to 1\wd7{\hss\accent"7E\hss}}\penalty 10000
  \hskip-1\wd7\penalty 10000\box7}
  \def\cftil#1{\ifmmode\setbox7\hbox{$\accent"5E#1$}\else
  \setbox7\hbox{\accent"5E#1}\penalty 10000\relax\fi\raise 1\ht7
  \hbox{\lower1.15ex\hbox to 1\wd7{\hss\accent"7E\hss}}\penalty 10000
  \hskip-1\wd7\penalty 10000\box7} \def\cprime{$'$}
\begin{thebibliography}{1}

\bibitem{bourbakilie}
Nicolas Bourbaki.
\newblock {\em Lie groups and {L}ie algebras. {C}hapters 4--6}.
\newblock Elements of Mathematics (Berlin). Springer-Verlag, Berlin, 2002.
\newblock Translated from the 1968 French original by Andrew Pressley.

\bibitem{carterfinitegroupsoflietypebook}
Roger~W. Carter.
\newblock {\em Finite groups of {L}ie type}.
\newblock Pure and Applied Mathematics (New York). John Wiley \& Sons Inc., New
  York, 1985.
\newblock Conjugacy classes and complex characters, A Wiley-Interscience
  Publication.

\bibitem{delignelusztig}
Pierre Deligne and George Lusztig.
\newblock Duality for representations of a reductive group over a finite field.
\newblock {\em J. Algebra}, 74(1):284--291, 1982.

\bibitem{dignemichaelbook}
Francois Digne and Jean Michel.
\newblock {\em Representations of finite groups of {L}ie type}, volume~21 of
  {\em London Mathematical Society Student Texts}.
\newblock Cambridge University Press, Cambridge, 1991.

\bibitem{morrispcuspidal}
Lawrence Morris.
\newblock {$P$}-cuspidal representations of level one.
\newblock {\em Proc. London Math. Soc. (3)}, 58(3):550--558, 1989.

\bibitem{morrisinventiones}
Lawrence Morris.
\newblock Tamely ramified intertwining algebras.
\newblock {\em Invent. Math.}, 114(1):1--54, 1993.

\bibitem{morristamelyramified}
Lawrence Morris.
\newblock Tamely ramified supercuspidal representations.
\newblock {\em Ann. Sci. \'Ecole Norm. Sup. (4)}, 29(5):639--667, 1996.

\end{thebibliography}

\bigskip

\noindent Arno Kret \\
\noindent Universit\'e Paris-Sud, UMR 8628, Math\'ematique, B\^atiment 425, \\
\noindent F-91405 Orsay Cedex, France

\end{document}